
\documentclass[12pt]{amsart}
\setlength{\evensidemargin}{0in} \setlength{\oddsidemargin}{0in}
\setlength{\textwidth}{6.3in} \setlength{\topmargin}{0in}
\setlength{\textheight}{8.8in}

\usepackage{amsmath,amssymb,amsthm,amsfonts,enumerate,array,color,lscape,fancyhdr,layout,pst-all}
\usepackage{xcolor}
\usepackage[english]{babel}
\usepackage{young}
\usepackage{float}
\usepackage[all]{xy}
\usepackage[normalem]{ulem}
\usepackage[active]{srcltx}
\usepackage{mathrsfs}
\usepackage{etoolbox}
\usepackage[margin=1in]{geometry}
\usepackage[colorlinks=true, breaklinks=true, allcolors=blue]{hyperref}

\usepackage{xspace}
\newlength\yStones
\newlength\xStones
\newlength\xxStones

\makeatletter
\def\Stones{\pst@object{Stones}}
\def\Stones@i#1{%
  \pst@killglue%
  \begingroup%
  \use@par%
  \setlength\xxStones{\xStones}%
  \expandafter\Stones@ii#1,,\@nil
  \endgroup
  \global\addtolength\xStones{0.6cm}%
  \global\addtolength\yStones{-7.5mm}}%
\def\Stones@ii#1,#2,#3\@nil{%
  \rput(\xxStones,\yStones){%
    \psframebox[framesep=0]{%
      \parbox[c][6mm][c]{11mm}{\makebox[11mm]{$#1$}}}}%
  \addtolength\xxStones{1.2cm}%
  \ifx\relax#2\relax\else\Stones@ii#2,#3\@nil\fi}
\makeatother

\setlength\fboxrule{0.4pt}
\def\Stone#1{\fbox{\makebox[12mm]{\strut#1}}\kern2pt}

\allowdisplaybreaks
                

\theoremstyle{plain}
\newtheorem{theorem}{Theorem}[section]
\newtheorem{lemma}[theorem]{Lemma}
\newtheorem{corollary}[theorem]{Corollary}
\newtheorem{proposition}[theorem]{Proposition}

\theoremstyle{definition}

\newcommand{\Z}{\mathbb{Z}} 
\newcommand{\bk}{\Bbbk}
\newcommand{\A}{\mathbb{A}} 


\newcommand{\fm}{\mathfrak{m}}

\newcommand{\V}{\mathcal{V}}
\newcommand{\Spec}{\mathrm{Spec} \,}
\newcommand{\cO}{\mathcal{O}}

\newcommand{\boh}{\overline{0}}
\newcommand{\bon}{\overline{1}}

\newcommand{\Der}{\mathrm{Der}}

\newcommand{\ad}{\mathrm{ad}}

\newcommand{\cT}{\mathcal{T}}

\newcommand{\cV}{\mathcal{V}}

\begin{document}

\begin{title}{Simplicity of the Lie superalgebra of vector fields}
\end{title}

\author[H. Rocha]{Henrique Rocha}


\begin{abstract}
For a finitely generated integral super domain $A$, we prove the Lie superalgebra $\cV = \Der(A)$ of super derivations is a simple Lie superalgebra.
\end{abstract}


\subjclass[2020]{17B20, 17B66}


\maketitle




\section*{Introduction}
Jordan \cite{Jor86} proved the Lie algebra of vector fields on a algebraic variety $X$ is simple if $X$ is smooth. A natural question that arise is if this is result is true for an affine supervariety. On this paper, we prove that for a finitely generated integral super domain $A$, the Lie superalgebra $\cV = \Der(A)$ of super derivations is a simple Lie superalgebra. We use a similar approach to \cite{BF18} to prove it.

In Section \ref{section:preliminars}, we give general definitions. In Section \ref{section:mainresult} we prove the main result of this paper.

\section{Preliminaries}\label{section:preliminars}

Let $\bk$ be an algebraically closed field of characteristic $0$. Unless otherwise stated, all (super)algebras, (super)schemes, (super) vector spaces will be taken over $\bk$. For general definitions and results, we refer to \cite{CCF11}. For a superalgebra $A = A_{\boh} \oplus A_{\bon}$ we denote $\overline{A} = A / (A_{\bon})$ the quotient of $A$ by the ideal generated by its odd elements. Except as otherwise specified, we will use Latin letters, like $x,t, \dots$, for even elements, and Greek letters, like $\xi, \theta, \dots$, for odd elements.

Let $A$ be a finitely generated integral superdomain, i.e. $A \setminus A_{\bon} $ does not contain zero divisors. Then, $(A_{\bon})$ is a prime ideal, the localization map $A \rightarrow A_{(A_{\bon})}$ is injective and $\overline{A}$ is an integral domain.  By \cite{VMP90}, there exists a finitely generated projective $\overline{A}$-module $N$ such that $A \cong \Lambda^{\bullet}_{\overline{A}} N$. We denote by $X = \underline{\Spec}(A) = (|X|, \cO)$ the structure superscheme of $A$ with structure sheaf $\cO$, then $X$ is irreducible and integral. For a closed point $p \in X$, we denote by $\mathfrak{m}_p$ the maximal ideal of the local algebra $\cO_p$. Since $A$ is an integral superdomain, if the reduced scheme $X^r = \Spec(\overline{A})$ is smooth, then $X$ is smooth as well. For this and other general results about smooth supervarieties, we refer to \cite{She21} and references therein.

We say an even element $x \in A_{\bon}$ is $A$-regular if the associated multiplication map is injective. An odd element $\xi \in A_{\bon}$ is called $A$-regular if the sequence 
\[
A \xrightarrow{\xi} A \xrightarrow{\xi} A 
\]
is exact. We say a sequence $a_1,\dots,a_r \in A$ of homogeneous elements is regular if $a_i $ is $A/(a_1,\dots,a_{i-1})$-regular for each $i=1,\dots,r$.

\section{Simplicity of Lie superalgebra}\label{section:mainresult}
Functions $t_1,\dots,t_r,\theta_1,\dots,\theta_s \in \cO(U)$ define  a \emph{coordinate system} in a open subset $U$ of $X$ if they are regular on $U$ and the sections $dt_1,\dots,dt_r,d\theta_1,\dots,d\theta_s$ trivializes the cotangent bundle on $U$. In this case, the derivations $\partial_{t_1},\dots,\partial_{t_r},\partial_{\theta_1},\dots, \partial_{\theta_s} $ are well-define on $\cO(U)$ and \[ \cT (U) = \bigoplus_{i=1}^r \cO(U)\partial_{t_i} \oplus\bigoplus_{j=1}^s \cO(U) \partial_{\theta_j} .\]

\begin{proposition}
    If $X$ is smooth with dimension $r|s$, then for each closed point $p \in X$ there exists an affine open neighboorhod $U$ of $p$ and global functions $t_1,\dots,t_r,\theta_1,\dots,\theta_s \in A$ that form a coordinate system on $U$.
\end{proposition}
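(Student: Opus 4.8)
The plan is to choose the functions so that their differentials form a basis of the cotangent space $\mathfrak{m}_p/\mathfrak{m}_p^2$ at $p$, then spread this out to a neighbourhood, and finally read off the regularity of the sequence from the fact that the functions realise $U$ as \'etale over affine superspace. Since $X$ is smooth of dimension $r|s$, the reduced scheme $X^r = \Spec(\overline A)$ is smooth of dimension $r$; writing $\overline{\mathfrak m}$ for the maximal ideal of $\overline{\cO}_p$, the even part of $\mathfrak m_p/\mathfrak m_p^2$ is canonically $\overline{\mathfrak m}/\overline{\mathfrak m}^{\,2}$, an $r$-dimensional $\kappa(p)$-vector space. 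I would pick $\bar t_1,\dots,\bar t_r \in \overline A$ whose classes form a basis of it and lift them to even elements $t_1,\dots,t_r \in A_{\boh}$. For the odd directions, the isomorphism $A \cong \Lambda^{\bullet}_{\overline A} N$ from Section \ref{section:preliminars} identifies the odd part of $\mathfrak m_p/\mathfrak m_p^2$ with $N \otimes_{\overline A}\kappa(p)$, which has dimension $s$, and I would choose $\theta_1,\dots,\theta_s$ in the degree-one component $N \subseteq A_{\bon}$ whose images form a basis there.

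By construction the values at $p$ of the $r+s$ global sections $dt_1,\dots,dt_r,d\theta_1,\dots,d\theta_s$ of the sheaf of K\"ahler differentials $\Omega_{A/\bk}$ form a basis of the fibre $\Omega_{A/\bk}\otimes\kappa(p)$. Smoothness means $\Omega_{A/\bk}$ is coherent and locally free of rank $r|s$, so by Nakayama these sections generate its stalk at $p$, hence generate it over some distinguished affine open $U = X_g$ with $g \in A_{\boh}$ and $g(p)\neq 0$; over such a $U$ a surjection $\cO^{\,r|s}\to \Omega_{A/\bk}|_U$ of locally free sheaves of equal rank over an affine base is an isomorphism. Thus $dt_1,\dots,dt_r,d\theta_1,\dots,d\theta_s$ trivialize the cotangent bundle on $U$, and in particular the dual derivations $\partial_{t_i},\partial_{\theta_j}$ are well defined on $\cO(U)$.

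It remains to check that $t_1,\dots,t_r,\theta_1,\dots,\theta_s$ is a regular sequence on $U$. These functions define a morphism $\phi\colon U \to \A^{r|s}$ with $\phi^{*}(x_i) = t_i$ and $\phi^{*}(\xi_j) = \theta_j$, where $x_i,\xi_j$ are the standard coordinates; since $\phi^{*}\Omega_{\A^{r|s}/\bk}$ is generated by the $dt_i$ and $d\theta_j$, which already generate $\Omega_{U/\bk}$, we get $\Omega_{U/\A^{r|s}} = 0$, and as source and target are smooth of the same dimension this forces $\phi$ to be \'etale, hence flat. Because $x_1,\dots,x_r,\xi_1,\dots,\xi_s$ is a regular sequence in $\bk[x_1,\dots,x_r\,|\,\xi_1,\dots,\xi_s]$ (here one uses $\xi_j^{2}=0$), its flat pullback $t_1,\dots,t_r,\theta_1,\dots,\theta_s$ is a regular sequence in $\cO(U)$, and the proposition follows. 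The underlying content is the classical existence of \'etale coordinates near a smooth point; the real work is the bookkeeping in the $\Z/2$-graded setting — identifying the even and odd parts of $\mathfrak m_p/\mathfrak m_p^2$, checking that Nakayama and "a surjection of equal-rank locally free sheaves over an affine base is an isomorphism" hold verbatim for sheaves of super $\cO$-modules, and that \'etaleness and regularity of sequences (in the sense recalled in Section \ref{section:preliminars}, including the odd case) behave under flat base change exactly as in the non-super case. Keeping $U$ affine with the $t_i,\theta_j$ genuinely global rather than mere sections over $U$, handled by taking $U$ a distinguished open of $X$, is the one point requiring a little care.
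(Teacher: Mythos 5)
Your argument is correct and lands in the same place as the paper's --- \'etale coordinates at a smooth point --- but by a noticeably different route, and with more of the actual work done. The paper embeds $X$ into an ambient $\A^{m|n}$ and selects, among the restricted linear coordinates $x_i - x_i(p)$ and $\xi_j$, a subfamily of size $r|s$ whose differentials form a basis of the tangent space at $p$; the existence of the neighbourhood $U$ and the regularity of the resulting sequence are then essentially asserted. You instead construct the coordinates intrinsically, splitting $\mathfrak m_p/\mathfrak m_p^2$ into its even part $\overline{\mathfrak m}/\overline{\mathfrak m}^{\,2}$ and its odd part $N\otimes_{\overline A}\kappa(p)$ via the isomorphism $A\cong\Lambda^{\bullet}_{\overline A}N$, and you supply the two steps the paper leaves implicit: the spreading-out to a distinguished affine open (Nakayama plus the splitting of an equal-rank surjection of locally free sheaves), and the verification that the sequence is regular in the sense of Section~\ref{section:preliminars}, obtained as the flat pullback of the standard regular sequence $x_1,\dots,x_r,\xi_1,\dots,\xi_s$ along an \'etale map $U\to\A^{r|s}$. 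The only step you state rather than prove is the implication ``unramified between smooth superschemes of equal dimension implies flat''; in the super setting this is most cleanly extracted from the local structure theorem (Proposition~\ref{proposition:descriptionofA}, i.e.\ \cite[Theorem 8.3]{She21}), which identifies the completed local rings at source and target and gives flatness pointwise --- a sentence to that effect would close the argument. Both approaches do produce genuinely global $t_i,\theta_j$: yours because $A\twoheadrightarrow\overline A$ and $N\subset A_{\bon}$ allow global representatives, the paper's because the ambient coordinates are already global.
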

\begin{proof}
    Assume $X \subset \A^{m|n}$. Then $A \cong \bk[x_1,\dots,x_m,\xi_1,\dots,\xi_n]/I$ for some ideal $I$, then $t_i = x_i - x_i (p)$, $\theta_j = \xi_i$ generates the corresponding maximal ideal $\mathfrak{m}$ of $A$. Since $X$ is smooth, there exists $t_{i_1},\dots,t_{i_r},\theta_{j_1},\dots,\theta_{j_s}$ such that the image of the respective local sections $t_{i_1},\dots,t_{i_r},\theta_{j_1},\dots,\theta_{j_s} \in \mathfrak{m}_p$ are regular and form a basis of the tangent space $T_pX$. So there exists an open set $U$ that contains $p$ such that $t_{i_1},\dots,t_{i_r},\theta_{j_1},\dots,\theta_{j_s}$ form a coordinate system on $U$. Note that $t_{i_1},\dots,t_{i_r},\theta_{j_1},\dots,\theta_{j_s}$ generates the maximal ideal $\mathfrak{m}_p$ of $\cO_p$. 
\end{proof}

Consider the $\fm_p$-adic filtration defined on $\cO_p$ and denote by $\widehat{\cO_p} = \lim_{\leftarrow} A/\fm_p^i$ its completion. Fix $t_1,\dots,t_r,\theta_1,\dots,\theta_s\in A$ and an affine neighbourhood $U$ of $p$ such that $t_1$, $\dots$, $t_r$, $\theta_1$, $\dots$, $\theta_s$ is a coordinate system on $U$. We may assume sections $t_1,\dots,t_r,\theta_1,\dots,\theta_s\in \cO_p$ generates the maximal ideal $\mathfrak{m}_p$.

\begin{proposition}\label{proposition:descriptionofA}
There is an isomorphism $\widehat{\cO_p} \rightarrow \bk [[T_1,\dots,T_r,\Theta_1,\dots,\Theta_s]]$ such that $t_i \mapsto T_i$ and $\theta_j \mapsto \Theta_j$. Furthermore, $\cO_p \cong \overline{\cO_p} [\Theta_1,\dots,\Theta_s]$ where $\overline{\cO_p}= \cO_p/(\mathcal{O}_{p,\bon})$ and $\theta_i \mapsto \Theta_i$.
\end{proposition}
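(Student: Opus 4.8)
The plan is to establish the second assertion first --- the polynomial description $\cO_p \cong \overline{\cO_p}[\Theta_1,\dots,\Theta_s]$ --- and then deduce the description of the completion by reducing to the classical structure theorem for complete regular local rings.

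First I would use that, by \cite{VMP90}, $A \cong \Lambda^{\bullet}_{\overline{A}} N$ for a finitely generated projective $\overline{A}$-module $N$; localizing at the closed point $p$ yields $\cO_p \cong \Lambda^{\bullet}_{\overline{\cO_p}} N_p$ with $N_p$ free over the local ring $\overline{\cO_p}$. Writing $\overline{\mathfrak{m}_p}$ for the maximal ideal of $\overline{\cO_p}$, a short computation with this exterior-algebra presentation shows $\mathfrak{m}_p = \overline{\mathfrak{m}_p} \oplus \bigoplus_{k\geq 1}\Lambda^k N_p$ and $\mathfrak{m}_p^2 = \overline{\mathfrak{m}_p}^2 \oplus \overline{\mathfrak{m}_p} N_p \oplus \bigoplus_{k\geq 2}\Lambda^k N_p$, so that the odd part of $\mathfrak{m}_p/\mathfrak{m}_p^2$ is $N_p/\overline{\mathfrak{m}_p} N_p$; since $X$ has dimension $r|s$, this forces $\mathrm{rank}\, N_p = s$. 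Because $t_1,\dots,t_r,\theta_1,\dots,\theta_s$ is a coordinate system on $U$, the classes $d\theta_1,\dots,d\theta_s$ form a basis of the odd cotangent space at $p$, i.e. $\theta_1,\dots,\theta_s$ reduce to a basis of $N_p/\overline{\mathfrak{m}_p} N_p$; by Nakayama's lemma they then generate $N_p$, and by the rank count they form a free $\overline{\cO_p}$-basis. Hence $\cO_p = \Lambda^{\bullet}_{\overline{\cO_p}} N_p = \overline{\cO_p}[\theta_1,\dots,\theta_s]$, which is the stated isomorphism $\cO_p \cong \overline{\cO_p}[\Theta_1,\dots,\Theta_s]$ with $\theta_j \mapsto \Theta_j$.

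Next I would pass to completions. The previous step gives $\mathfrak{m}_p = \overline{\mathfrak{m}_p}\cO_p + (\theta_1,\dots,\theta_s)$ with $(\theta_1,\dots,\theta_s)^{s+1} = 0$, whence $\overline{\mathfrak{m}_p}^{\,n}\cO_p \subseteq \mathfrak{m}_p^{\,n} \subseteq \overline{\mathfrak{m}_p}^{\,n-s}\cO_p$ for $n > s$, so the $\mathfrak{m}_p$-adic and $\overline{\mathfrak{m}_p}\cO_p$-adic topologies on $\cO_p$ coincide. Since $\cO_p$ is a finite free $\overline{\cO_p}$-module (basis: the squarefree monomials in $\theta_1,\dots,\theta_s$), completion commutes with this extension: $\widehat{\cO_p} \cong \cO_p \otimes_{\overline{\cO_p}} \widehat{\overline{\cO_p}} = \widehat{\overline{\cO_p}}[\theta_1,\dots,\theta_s]$. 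Finally, $\overline{\cO_p}$ is the local ring at the closed point $p$ of the smooth variety $X^r = \Spec(\overline{A})$, hence a regular local ring of dimension $r$ with residue field $\bk$; since $t_1,\dots,t_r,\theta_1,\dots,\theta_s$ generate $\mathfrak{m}_p$, the images of $t_1,\dots,t_r$ generate $\overline{\mathfrak{m}_p}$ and thus form a regular system of parameters, so the classical structure theorem gives $\widehat{\overline{\cO_p}} \cong \bk[[T_1,\dots,T_r]]$ with $t_i \mapsto T_i$. Substituting, $\widehat{\cO_p} \cong \bk[[T_1,\dots,T_r]][\Theta_1,\dots,\Theta_s] = \bk[[T_1,\dots,T_r,\Theta_1,\dots,\Theta_s]]$, the last identity because the odd variables $\Theta_j$ are nilpotent; and $t_i \mapsto T_i$, $\theta_j \mapsto \Theta_j$ by construction.

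The step I expect to be the main obstacle is the commutation of completion with adjoining the odd variables: it requires the comparison of the $\mathfrak{m}_p$-adic and $\overline{\mathfrak{m}_p}\cO_p$-adic topologies together with care about the super conventions, whereas the remaining ingredients are Nakayama's lemma over $\overline{\cO_p}$ and classical commutative structure theory applied to $X^r$. An alternative route avoiding this point is to argue directly: the continuous $\bk$-algebra homomorphism $\bk[[T_1,\dots,T_r,\Theta_1,\dots,\Theta_s]] \to \widehat{\cO_p}$ defined by $T_i \mapsto t_i$, $\Theta_j \mapsto \theta_j$ induces on associated graded rings the canonical surjection of the free supercommutative algebra onto $\mathrm{gr}_{\mathfrak{m}_p}\cO_p$; smoothness of $X$ at $p$ forces this graded map to be an isomorphism by an embedding-dimension and Hilbert-series count, and a surjection of complete filtered rings inducing an isomorphism on associated graded rings is itself an isomorphism.
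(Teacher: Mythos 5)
Your argument is essentially correct, but it takes a genuinely different route from the paper, which disposes of this proposition in one line by citing \cite[Theorem 8.3]{She21}. You instead reprove it from scratch out of the Voronov--Manin--Penkov splitting $A \cong \Lambda^{\bullet}_{\overline{A}} N$ (quoted in Section \ref{section:preliminars}) combined with classical commutative algebra: Nakayama over $\overline{\cO_p}$, the comparison of the $\mathfrak{m}_p$-adic and $\overline{\mathfrak{m}_p}\cO_p$-adic topologies, finite freeness of $\cO_p$ over $\overline{\cO_p}$ to commute completion with adjoining the odd variables, and the Cohen structure theorem for $\widehat{\overline{\cO_p}}$. What your route buys is self-containedness and a transparent record of exactly where smoothness enters (regularity of $\overline{\cO_p}$ and the rank count $\operatorname{rank} N_p = s$); what the citation buys is brevity and the assurance that the super conventions are handled in the reference. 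One step you should tighten: an odd section $\theta_j$ lives in $\Lambda^{\mathrm{odd}} N_p = N_p \oplus \Lambda^3 N_p \oplus \cdots$ and need not lie in $N_p$ itself, so ``the $\theta_j$ generate $N_p$'' is not literally what Nakayama gives; rather their degree-one components form a free basis of $N_p$, and one needs the small extra argument (using nilpotence of the ideal $\Lambda^{\geq 1}N_p$ and the fact that a surjection between free $\overline{\cO_p}$-modules of the same finite rank is an isomorphism) that $\overline{\cO_p}[\Theta_1,\dots,\Theta_s] \rightarrow \cO_p$, $\Theta_j \mapsto \theta_j$, is still bijective. Your alternative closing argument via associated graded rings is also sound and would bypass the topology-comparison step entirely.
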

\begin{proof}
It follows from \cite[Theorem 8.3]{She21}.
\end{proof}

Since $A$ is a super domain, the localization map $A \rightarrow \cO_p$ is an embedding. Thus, by Proposition \ref{proposition:descriptionofA}, there is an embedding \[ \pi: A \hookrightarrow \bk[[T_1,\dots,T_r,\Theta_1,\dots,\Theta_s]]\] such that $ \pi(t_i) = T_i$ and $\pi(\theta_j) = \Theta_j$. Let $\widehat{\mathfrak{L}} = \Der ( \bk[[T_1,\dots,T_r|\Theta_1,\dots,\Theta_s]] ) $, then \[\left \{\partial_{T_1},\dots,\partial_{T_r},\partial_{\Theta_1},\dots,\partial_{\Theta_s}   \right\}\]
is a basis of $\widehat{\mathfrak{L}}$ as a $\bk[[T_1,\dots,T_r,\Theta_1,\dots,\Theta_s]]$-module. 
\begin{proposition}\label{proposition:pihatmorphism001}
There exists a unique embedding $\widehat{\pi}: \cV \rightarrow \widehat{\mathfrak{L}}$ such that $\widehat{\pi} (\eta) \pi (f) = \pi(\eta(f))$ for all $f \in A$ and $\eta \in \V$.
\end{proposition}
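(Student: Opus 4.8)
The plan is to extend an arbitrary $\eta \in \cV$ along the tower of ring embeddings $A \hookrightarrow \cO_p \hookrightarrow \widehat{\cO_p}$ and then transport it to $\widehat{\mathfrak{L}}$ via the isomorphism of Proposition~\ref{proposition:descriptionofA}. Concretely, I would first use that derivations extend uniquely along localizations: since $\cO_p$ is the localization of $A$ at a homogeneous prime, the quotient rule forces a unique $\eta_p \in \Der(\cO_p)$ restricting to $\eta$. Then I would observe that $\eta_p$ is automatically continuous for the $\fm_p$-adic topology, because the Leibniz rule gives $\eta_p(\fm_p^{\,n}) \subseteq \fm_p^{\,n-1}$ by induction on $n$; a continuous derivation extends uniquely to the completion, so one obtains a continuous $\widehat{\eta} \in \Der(\widehat{\cO_p})$ restricting to $\eta_p$ (the Leibniz identity survives the limit because multiplication is continuous, and uniqueness is forced by density of $\cO_p$).

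Having done this, I would define $\widehat{\pi}(\eta)$ to be the image of $\widehat{\eta}$ under the identification $\widehat{\cO_p} \cong \bk[[T_1,\dots,T_r,\Theta_1,\dots,\Theta_s]]$, so that $\widehat{\pi}(\eta) \in \Der(\bk[[T_1,\dots,T_r,\Theta_1,\dots,\Theta_s]]) = \widehat{\mathfrak{L}}$. Since the embedding $\pi \colon A \hookrightarrow \bk[[T_1,\dots,T_r,\Theta_1,\dots,\Theta_s]]$ is by construction the composite $A \hookrightarrow \cO_p \hookrightarrow \widehat{\cO_p} \cong \bk[[T_1,\dots,T_r,\Theta_1,\dots,\Theta_s]]$ and $\widehat{\eta}$ restricts to $\eta$, the relation $\widehat{\pi}(\eta)\,\pi(f) = \pi(\eta(f))$ for all $f \in A$ holds with no further computation.

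Uniqueness, linearity, parity and compatibility with the bracket then all reduce to the fact (recorded just above the statement) that $\{\partial_{T_1},\dots,\partial_{T_r},\partial_{\Theta_1},\dots,\partial_{\Theta_s}\}$ is a basis of $\widehat{\mathfrak{L}}$ as a $\bk[[T_1,\dots,T_r,\Theta_1,\dots,\Theta_s]]$-module: an element of $\widehat{\mathfrak{L}}$ is determined by its values on $T_i = \pi(t_i)$ and $\Theta_j = \pi(\theta_j)$. Thus any $D \in \widehat{\mathfrak{L}}$ with $D\pi(f) = \pi(\eta(f))$ for all $f$ must satisfy $D(T_i) = \pi(\eta(t_i)) = \widehat{\pi}(\eta)(T_i)$ and $D(\Theta_j) = \pi(\eta(\theta_j)) = \widehat{\pi}(\eta)(\Theta_j)$, hence $D = \widehat{\pi}(\eta)$; comparing values on the coordinates likewise shows $\widehat{\pi}$ is $\bk$-linear and parity-preserving, and that $\widehat{\pi}([\eta_1,\eta_2]) = [\widehat{\pi}(\eta_1),\widehat{\pi}(\eta_2)]$, using the defining relation together with $\eta_2(t_i),\eta_2(\theta_j) \in A$. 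Injectivity of $\widehat{\pi}$ is immediate from injectivity of $\pi$: if $\widehat{\pi}(\eta) = 0$ then $\pi(\eta(f)) = 0$, hence $\eta(f) = 0$, for every $f \in A$.

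The part requiring the most care is the chain of extensions in the first paragraph --- above all the automatic continuity of $\eta_p$, which is what makes the passage to $\widehat{\cO_p}$ legitimate --- together with keeping track of the signs in the super Leibniz and quotient rules; once that is in place the remainder of the argument is purely formal, resting entirely on the module basis $\{\partial_{T_i},\partial_{\Theta_j}\}$ of $\widehat{\mathfrak{L}}$.
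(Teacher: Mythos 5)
Your argument is correct, and it reaches the statement by a genuinely different route than the paper. The paper's proof starts from the formula $\widehat{\pi}(\eta) = \sum_i \pi(\eta(t_i))\,\partial_{T_i} + \sum_j \pi(\eta(\theta_j))\,\partial_{\Theta_j}$ — the only possible candidate, since $\{\partial_{T_i},\partial_{\Theta_j}\}$ is a free basis of $\widehat{\mathfrak{L}}$, which is also where uniqueness comes from — and then \emph{verifies} the identity $\widehat{\pi}(\eta)\pi(f)=\pi(\eta(f))$ by checking it on the dense subalgebra $\bk[t_1,\dots,t_r,\theta_1,\dots,\theta_s]\subset A$ and invoking continuity of $\pi$ and of all derivations for the $\fm$-adic topologies. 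You instead build the extension intrinsically, pushing $\eta$ through the localization $A\hookrightarrow\cO_p$ (quotient rule) and then through the completion $\cO_p\hookrightarrow\widehat{\cO_p}$ (using the automatic adic continuity $\eta_p(\fm_p^{\,n})\subseteq\fm_p^{\,n-1}$), so that the defining identity holds by construction and no density check of the identity itself is needed. Both proofs ultimately rest on the same two ingredients — adic continuity of derivations and the module basis of $\widehat{\mathfrak{L}}$ — but yours trades the paper's ``guess and verify by density'' for a chain of canonical extensions; a side benefit is that your version makes the Lie superalgebra morphism property and the fact that the result genuinely lands in $\Der(\bk[[T_1,\dots,T_r,\Theta_1,\dots,\Theta_s]])$ fall out of uniqueness rather than requiring separate comment. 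One point worth making explicit in your write-up: for ``$\widehat{\eta}$ restricts to $\eta$'' to be meaningful you need $\cO_p\to\widehat{\cO_p}$ to be injective, i.e.\ $\bigcap_n\fm_p^{\,n}=0$; this holds because $\cO_p$ is a Noetherian local superring (Krull intersection), and it is what the paper's remark that the topologies are ``separable'' is silently supplying.
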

\begin{proof}
Fix $\eta \in \cV$. There is a unique way to define $\widehat{\pi}$ which is \[\widehat{\pi}(\eta) = \sum_{j=1}^r \pi(\eta(t_i)) \partial_{T_i} + \sum_{j=1}^s \pi(\eta(\theta_j)) \partial_{\Theta_j}
\]
If $\mathfrak{m}$ is the maximal ideal of $A$ associated to $p$, then we may consider the $\mathfrak{m}$-adic topology on $A$. Similarly, the maximal ideal of $\bk [[T_1,\dots,T_r,\Theta_1,\dots,\Theta_s]]$ defines a topology on it. Both topologies are separable, $\bk [t_1,\dots,t_r,\theta_1,\dots,\theta_s] \subset A$ is dense on $A$ and $\pi : A \rightarrow \bk[[T_1,\dots,T_r,\Theta_1,\dots,\Theta_s]]$ is a continuous map. Furthermore, every derivation in $\cV$ and $ \widehat{\mathfrak{L}}$ are continuous on this topologies.  For each element $f \in \bk [t_1,\dots,t_r,\theta_1,\dots,\theta_s] \subset A$, the equality $\widehat{\pi} (\eta) \pi (f) = \pi(\eta(f))$ holds. The main claim follows by the continuity of the maps $\widehat{\pi} \circ \pi $ and $\pi \circ \eta$.
\end{proof}

Suppose $U = D(h')$, then there exists $h = h'^S$ for some natural number $S$ such that $ \tau_i = h\partial_{t_i}$ and $\sigma_j = h \partial_{t_j}$ are global sections. Furthermore, 
\[
  \widehat{\pi} (\tau_k)= \pi(h) \partial_{T_k}, \quad
   \widehat{\sigma_l} (\sigma_l)  = \pi(h) \partial_{\Theta_l},
\]
for each $k=1,\dots,r$, $l=1,\dots,s$.

The Lie superalgebra $\widehat{\mathfrak{L}}$ has a filtration given by
\[
\widehat{\mathfrak{L}} = \widehat{\mathfrak{L}} (-1) \supset \widehat{\mathfrak{L}} (0) \supset \widehat{\mathfrak{L}} (1)  \supset \widehat{\mathfrak{L}} (2)  \supset \cdots
\]
where $\widehat{\mathfrak{L}}(i-1) = \fm^{i}_0 \widehat{\mathfrak{L}}$ and $\fm_0$ is the ideal of $\bk[[T_1,\dots,T_r | \Theta_1,\dots, \Theta_s ]]$ generated by $T_1$, $\dots$, $T_r$, $\Theta_1$, $\dots$, $\Theta_s$. For all $i,j$, we have $[ \widehat{\mathfrak{L}}(i) , \widehat{\mathfrak{L}}(j) ] \subset \widehat{\mathfrak{L}}(i+j)$ for $i+j \geq -1$. Consider the associated graded Lie superalgebra 
\[
\textup{Gr} \, \widehat{\mathfrak{L}} = \widehat{\mathfrak{L}}(-1) / \widehat{\mathfrak{L}}(0) \oplus \widehat{\mathfrak{L}}(0) / \widehat{\mathfrak{L}}(1) \oplus \widehat{\mathfrak{L}}(1) / \widehat{\mathfrak{L}}(2) \oplus \cdots,
\]
then $\textup{Gr} \, \widehat{\mathfrak{L}}$ is isomorphic to $\bk[T1,\dots,T_r,\Theta_1,\cdots,\Theta_s]$. Define $\omega: \widehat{\mathfrak{L}} \rightarrow \textup{Gr} \, \widehat{\mathfrak{L}}$ by $\omega(\mu) = \mu + \widehat{\mathfrak{L}}(j+1) \in \widehat{\mathfrak{L}}(j) / \widehat{\mathfrak{L}}(j+1) $ for $\mu \in \widehat{\mathfrak{L}}(j) \setminus \widehat{\mathfrak{L}}(j+1)$. Thus, $\omega$ is not linear, but if $[\omega(\eta),\omega(\mu) ] \neq 0 $ then $\omega([\eta,\mu]) = [\omega(\eta),\omega(\mu) ]$. Note that $\omega(\tau_i)$ is a non-zero multiple of $\partial_{t_i} $ and $\omega(\sigma_i)$ is a non-zero multiple of $\partial_{\theta_j} $.

\begin{proposition}\label{proposition:thereexistsmufwithmufpzero}
Suppose $r|s \geq 1|0$. Let $\mathcal{J}$ be a nonzero $\Z_2$-graded ideal of $\V$, $p$ be a smooth closed point of $X$ and a $t_1,\dots,t_r,\theta_1,\dots,\theta_s$ coordinate system as before. Then 
\begin{enumerate}
    \item There exists $y \in \{t_1,\dots,t_r,\theta_1,\dots,\theta_s \}$ and $\mu \in \mathcal{J}_{|y|}$ such that $\mu(y)(p) \neq 0$.
    \item For all $y \in \{t_1,\dots,t_r,\theta_1,\dots,\theta_s \}$, there exists $\mu \in \mathcal{J}_{|y|}$ such that $\mu(y)(p) \neq 0$. In particular, $\mathcal{J}_{\boh} \neq 0$.
    \item There exists $g \in A_{\boh}$ and $\mu \in \mathcal{J}_{\boh}$ such that $\mu(\mu(g))(p) \neq 0$.
\end{enumerate}
\end{proposition}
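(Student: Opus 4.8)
The plan is to work locally near the smooth point $p$ using the completion $\widehat{\cO_p} \cong \bk[[T_1,\dots,T_r,\Theta_1,\dots,\Theta_s]]$ and the embedding $\widehat{\pi} : \cV \hookrightarrow \widehat{\mathfrak{L}}$, tracking everything through the leading-term map $\omega$ into $\mathrm{Gr}\,\widehat{\mathfrak{L}} \cong \bk[T_1,\dots,T_r,\Theta_1,\dots,\Theta_s]$. The key feature we exploit is that $\omega$ turns the Lie bracket into a bracket on the graded algebra \emph{whenever the graded bracket is nonzero}, so by bracketing an arbitrary nonzero element of $\mathcal{J}$ against the global vector fields $\tau_i = h\partial_{t_i}$, $\sigma_j = h\partial_{\theta_j}$ (whose leading terms are nonzero multiples of $\partial_{t_i}$, $\partial_{\theta_j}$), we can lower the order of vanishing of the leading term until it becomes a nonzero constant coefficient; a vector field whose leading term is a constant multiple of some $\partial_y$ is exactly one with $\mu(y)(p)\neq 0$.

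For part (1): pick any nonzero homogeneous $\eta\in\mathcal{J}$ and look at $\widehat{\pi}(\eta)\in\widehat{\mathfrak{L}}$, say $\widehat{\pi}(\eta) = \sum f_i \partial_{T_i} + \sum g_j\partial_{\Theta_j}$ with not all $f_i,g_j$ zero (injectivity of $\widehat\pi$). Let $d$ be the filtration degree, i.e.\ $\widehat{\pi}(\eta)\in\widehat{\mathfrak{L}}(d)\setminus\widehat{\mathfrak{L}}(d+1)$; if $d=-1$ we are already done because then some coefficient has a nonzero constant term, forcing $\mu(y)(p)\neq 0$ for the corresponding coordinate. If $d\geq 0$, I induct on $d$: choose a monomial $T^a\Theta^b\,\partial_y$ appearing in $\omega(\eta)$ with $|a|+|b|=d+1\geq 1$, pick a variable $z\in\{t_1,\dots,\theta_s\}$ with positive exponent in that monomial, and bracket $\eta$ with the appropriate $\tau_k$ or $\sigma_l$ for $z$. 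The leading term $[\,\omega(\eta),\omega(\tau_k)\,] = [\,\omega(\eta),\ (\text{nonzero scalar})\partial_{z}\,]$ in the polynomial Lie algebra strictly lowers the total degree (differentiation in $z$), and one checks this bracket is nonzero by examining the coefficient of the chosen monomial — here one must be slightly careful that $\partial_z$ applied to $\omega(\eta)$ doesn't kill the relevant term and that no cancellation with the other summand $\partial_z(g_j)\partial_{\Theta_j}$-type contributions occurs, but choosing the monomial of lowest degree appearing, or a monomial maximal in a fixed term order, avoids this. Since $\omega$ respects nonzero brackets, $\omega([\eta,\tau_k]) = [\omega(\eta),\omega(\tau_k)]\neq 0$ has strictly smaller filtration degree, and $[\eta,\tau_k]\in\mathcal{J}$, so induction applies and terminates at degree $-1$, giving the desired $y$ and $\mu$.

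For part (2): given part (1) we have some $y_0$ and $\mu_0\in\mathcal{J}_{|y_0|}$ with $\mu_0(y_0)(p)\neq 0$, i.e.\ $\omega(\mu_0) = c\,\partial_{y_0} + (\text{higher order})$ with $c\neq 0$; by scaling, $\omega(\mu_0)$ has leading part exactly $\partial_{y_0}$. To reach an arbitrary target $y$, I use brackets with the global fields to "rotate" $\partial_{y_0}$ into $\partial_y$: in the polynomial Lie algebra, $[\,y_0\partial_y\ \text{-type elements}, \partial_{y_0}\,]$ produces $\pm\partial_y$ up to scalar. Concretely, the function $t_{i}$ (resp.\ $\theta_j$) times the global field $\tau_k$ or $\sigma_l$ need not be global, but $h\,t_i\,\partial_{t_k} = t_i\tau_k \in \mathcal{V}$ since $\tau_k\in\mathcal{V}$ and $t_i\in A$; more simply, $t_i\tau_k, \theta_j\tau_k, t_i\sigma_l,\theta_j\sigma_l$ all lie in $\mathcal{V}$ as $A$-multiples of global fields, and their leading terms under $\omega$ are (nonzero multiples of) $T_i\partial_{t_k}$, etc. Then for any coordinate $y$, bracketing a suitable such element whose leading term is a multiple of $y_0\partial_y$ against $\mu_0$ yields, via the $\omega$-compatibility, an element of $\mathcal{J}_{|y|}$ whose leading term is a nonzero multiple of $\partial_y$, hence with $\mu(y)(p)\neq 0$. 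Taking $y$ even (which exists since $r\geq 1$ or, if $r=0$, adapting — but the hypothesis $r|s\geq 1|0$ together with the parity bookkeeping gives an even $\mu$) yields $\mathcal{J}_{\boh}\neq 0$; note the parity of $t_i\sigma_l$ etc.\ is arranged so that one can always land in the even part.

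For part (3): start from $\mu\in\mathcal{J}_{\boh}$ with leading term $\partial_{y}$ for some \emph{even} coordinate $y = t_i$, which exists by part (2) (using $r\geq 1$; if $r=0$ then $X$ has no even coordinates and the statement would need $g$ among the $\Theta$'s, but then no even nonzero-square $g$ exists, so the hypothesis $r|s\geq 1|0$ is exactly what makes (3) sensible — we take $y=t_i$). Then $\omega(\mu) = \partial_{t_i} + (\text{order}\geq 1)$, so $\mu(t_i)(p)=1$. Now I want $g\in A_{\boh}$ with $\mu(\mu(g))(p)\neq 0$; the natural candidate is $g = t_i^2$ or a function whose expansion begins $t_i^2$. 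Compute $\mu(t_i^2) = 2 t_i\,\mu(t_i)$, which at $p$ vanishes (since $t_i(p)=0$) — we need the \emph{next} order. Using $\omega(\mu)=\partial_{t_i}+\cdots$: applying $\mu$ twice to $t_i^2$ gives leading behavior $\partial_{t_i}^2(t_i^2) = 2$, i.e.\ $\mu(\mu(t_i^2)) = 2\mu(t_i)^2 + 2t_i\mu(\mu(t_i))$, and evaluating at $p$ gives $2\mu(t_i)(p)^2 = 2\neq 0$ provided $t_i^2\in A$, which it is. So $g = t_i^2$ works, and this completes the proof.

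The main obstacle I anticipate is the nonlinearity of $\omega$ in the inductive step of part (1): one must guarantee that the graded bracket $[\omega(\eta),\omega(\tau_k)]$ does not vanish, which requires choosing the coordinate $z$ and the monomial in $\omega(\eta)$ carefully (e.g.\ via a term order, or by handling even and odd variables separately since $\partial_{\Theta_l}^2=0$ introduces sign and nilpotency subtleties), and then checking that bracketing against $\tau_k$ or $\sigma_l$ — whose own leading term is only a nonzero multiple of $\partial_z$, not exactly $\partial_z$ — still produces a strictly-lower-degree nonzero leading term. Everything else is a routine leading-term computation in the graded polynomial Lie superalgebra.
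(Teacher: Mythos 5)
Your proposal is correct and follows essentially the same route as the paper: reduce modulo the leading-term map $\omega$ into $\bk[T_1,\dots,T_r,\Theta_1,\dots,\Theta_s]$, bracket with the global fields $\tau_i,\sigma_j$ to drive the filtration degree down to $-1$ for part (1), bracket with $y\tau_i$ and $y\sigma_j$ to hit every coordinate direction for part (2), and use $g=t_i$ or $g=t_i^2$ for part (3). The only cosmetic differences are that the paper performs the degree reduction in one shot via $\ad(\tau_1)^{k_1}\cdots\ad(\sigma_{i_l})$ matched to a single monomial (which automatically certifies that no intermediate bracket vanishes, the point you flag as the main delicacy of your step-by-step induction), and that it splits part (3) into the cases $\mu(\mu(t_i))(p)\neq 0$ and $=0$ rather than going directly to $t_i^2$.
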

\begin{proof}
Let $\eta \in \mathcal{J}$. Since $\mathcal{J}$ is $\Z_2$-graded, we may assume $\eta$ homogeneous. Write
\[
\omega(\widehat{\pi}(\eta) ) = \sum_{i=1}^{r} u_i \partial_{T_i} + \sum_{j=1}^s u_{r+j}\partial_{\Theta_j} \neq 0
\]
where $u_1,\dots,u_{r+s} \in \bk[T_1,\dots,T_r,\Theta_1,\dots,\Theta_s]$ are homogeneous polynomials with the same degree. Choose a nonzero polynomial $u_{j_0} \in \{u_1,\dots,u_{r+s}\} $, and let $Y  = T_{j_0}$ if $j_0 \geq r$ or $Y = \Theta_{j_0-r}$ if $j_0 > r$. Let $T_1^{k_1} \cdots T_r^{k_r} \Theta_{i_1} \cdots \Theta_{i_l}$ be a monomial that occurs in $u_{j_0}$, then
\[
0 \neq \left (\partial_{T_1} \right)^{k_1}  \cdots \left (\partial_{T_1} \right)^{k_s} \left (\partial_{\Theta_{i_1}}   \right ) \cdots \left (\partial_{\Theta_{i_l}}   \right ) u_{j_0} \in \bk 
\]

Consider 
\[
\mu = \ad(\tau_1)^{k_1} \cdots \ad(\tau_r)^{k_r} \ad(\sigma_{i_1} ) \cdots \ad(\sigma_{i_l}) \eta  \in \mathcal{J}.
\]
Then, $\mu$ is homogeneous and it has the same parity of $y$. Since $\omega$ preserves brackets, we see that
\[\omega(\widehat{\pi}(\mu)) = \ad (\omega(\widehat{\pi}(\tau_1)))^{k_1} \cdots \ad(\omega(\widehat{\pi}(\tau_s)))^{k_s}\ad(\omega(\widehat{\pi} (\sigma_{i_1})) )\cdots \ad(\omega(\widehat{\pi}(\sigma_{i_l} )) )  \omega(\eta) \in  \widehat{\mathfrak{L}}(-1) \setminus \widehat{\mathfrak{L}}(0) 
\]
and it contains $\frac{\partial }{\partial Y}$. Hence, $\omega(\widehat{\pi} ( \mu) ) Y \neq 0$, which implies that $\mu(y)(p) \neq 0$ where $y \in \{t_1,\dots,t_r,\theta_1,\dots,\theta_s \}$ is the coordinate such that $\widehat{\pi}(y) = Y $.

Let $i=1,\dots,r$, then $ \omega(  \widehat{\pi} ( [y \tau_i, \mu] ) ) = [\omega(  \widehat{\pi} ( y \tau_i )), \omega(  \widehat{\pi}( \mu)) ]$ contains $\frac{\partial}{\partial T_i}$. Hence, $  \omega(  \widehat{\pi} ( [y \tau_i, \mu] ) )T_i  \neq 0$, thus $[y \tau_i, \mu] (t_i)(p) \neq 0$. Using the same proof but taking $[y \sigma_j, \mu]$, it is possible to show that $[y\sigma_j,\mu ](\theta_j) (p) \neq 0$.

For the last item, take $\mu' =[y \tau_i, \mu] \in \mathcal{J}_{\boh}$. If $\mu'(\mu'(t_i))(p) \neq 0 $, it is done. If $\mu'(\mu'(t_i))(p) =0 $, then $t_i^2 \neq 0$ and
\[
\mu' (\mu'(t_i^2 ))(p)  = 2 \mu'(t_i\mu'(t_i))(p) = 2 \mu'(t_i) (p) \mu'(t_i)(p)+ 2 t_i \mu'(\mu'(t_i'))  = 2  (\mu'(t_i) (p) )^2 \neq 0 .
\]
\end{proof}
\begin{corollary}\label{corollary:thismapidealsurjective}
If $\mathcal{J}$ is an nonzero ideal of $\V$ and $p$ is a smooth closed point, then $\mathcal{J} \rightarrow T_p X$ given by $D \mapsto (f \mapsto D(f)(p) )$ is surjective.
\end{corollary}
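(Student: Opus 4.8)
The plan is to pass to the dual picture: identify $T_pX$ with $(\mathfrak{m}_p/\mathfrak{m}_p^2)^*$, observe that the image $W$ of $\mathcal{J}$ under evaluation at $p$ is a $\Z_2$-graded subspace, and use Proposition~\ref{proposition:thereexistsmufwithmufpzero}(2) --- applied not only to the fixed coordinate system but to \emph{every} coordinate system at $p$ --- to conclude that $W$ is contained in no coordinate hyperplane, hence equals $T_pX$. Concretely, since $p$ is smooth, $T_pX$ is finite dimensional, and for $f\in\mathfrak{m}_p$ and $D\in\V$ the scalar $D(f)(p)$ depends only on the class $\bar f\in\mathfrak{m}_p/\mathfrak{m}_p^2$ (a derivation kills $\bk$ and sends $\mathfrak{m}_p^2$ into $\mathfrak{m}_p$); the resulting pairing identifies $T_pX$ with $(\mathfrak{m}_p/\mathfrak{m}_p^2)^*$, under which the evaluation map $\mathrm{ev}_p\colon\V\to T_pX$, $D\mapsto(f\mapsto D(f)(p))$, is parity preserving. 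Hence $W:=\mathrm{ev}_p(\mathcal{J})$ is a $\Z_2$-graded subspace, and $W=T_pX$ is equivalent to $W^{\perp}=0$ in $\mathfrak{m}_p/\mathfrak{m}_p^2$, i.e. to the nonexistence of a nonzero homogeneous class $\bar f$ with $D(f)(p)=0$ for all $D\in\mathcal{J}$; since $W$ is graded it suffices to rule this out for $\bar f$ of each fixed parity.

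So suppose $\bar f\neq 0$ is, say, even (the odd case is symmetric). Replacing the lift $f$ by its linear part $\sum_i c_it_i$ --- which changes neither $\bar f$ nor the values $D(f)(p)$ for $D\in\mathcal{J}$ --- I may assume $f=\sum_i c_it_i\in A_{\boh}$ with $(c_i)\neq 0$. Completing $(c_i)$ to a matrix $C\in\GL_r(\bk)$ and setting $t'_i=\sum_k C_{ik}t_k\in A_{\boh}$, so that $t'_1=f$, the tuple $t'_1,\dots,t'_r,\theta_1,\dots,\theta_s$ is again a coordinate system on a (possibly smaller) affine neighbourhood of $p$: invertibility of $C$ keeps the differentials trivializing the cotangent bundle, the tuple still minimally generates $\mathfrak{m}_p$, and regularity then follows from smoothness via Proposition~\ref{proposition:descriptionofA}. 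Applying Proposition~\ref{proposition:thereexistsmufwithmufpzero}(2) to this coordinate system and to the coordinate $y=f$ yields $\mu\in\mathcal{J}_{\boh}$ with $\mu(f)(p)\neq 0$, contradicting $\bar f\in W^{\perp}$. Therefore $W^{\perp}=0$, so $\mathcal{J}\to T_pX$ is surjective.

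The only step that requires genuine (if routine) care is checking that a constant invertible linear change of variables, carried out within the even variables --- and, in the odd case, within the odd variables --- again yields a coordinate system in the precise sense used in Proposition~\ref{proposition:thereexistsmufwithmufpzero}: under $t'_i=\sum_k C_{ik}t_k$ one has $\partial_{t'_i}=\sum_k(C^{-1})_{ki}\partial_{t_k}$, $\tau'_i=\sum_k(C^{-1})_{ki}\tau_k$, and $\widehat{\pi}(\tau'_i)=\pi(h)\partial_{T'_i}$, so all the structure used in that proof transports verbatim. (Here $\mathcal{J}$ should be taken $\Z_2$-graded, as in Proposition~\ref{proposition:thereexistsmufwithmufpzero}.)
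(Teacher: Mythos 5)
Your proof is correct, but it takes a genuinely different route from the paper's. The paper's proof is a one-liner: it reuses the specific elements $[y\tau_i,\mu]$, $[y\sigma_j,\mu]\in\mathcal{J}$ built in the proof of Proposition~\ref{proposition:thereexistsmufwithmufpzero}(2), whose symbols $\omega(\widehat{\pi}(\cdot))$ are exactly nonzero \emph{constant} multiples of $\partial_{T_i}$, $\partial_{\Theta_j}$ (with no other components), so their images are nonzero multiples of the basis vectors of $T_pX$ and hence span. Note that this uses finer information from the \emph{proof} of Proposition~\ref{proposition:thereexistsmufwithmufpzero}(2) than its bare statement: knowing only that for each coordinate $y$ some $\mu\in\mathcal{J}$ has $\mu(y)(p)\neq 0$ does not by itself force the images to span (they could all coincide). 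Your duality argument is precisely the honest way to get surjectivity from the statement alone, at the price of having to apply the proposition to every coordinate system obtained from the fixed one by a constant invertible linear substitution within each parity, and of verifying that such substitutions again yield coordinate systems in the paper's sense. That verification is the only delicate point, and you handle it at the same level of rigor as the paper handles its own coordinate constructions: invertibility of $C$ preserves the trivialization of the cotangent bundle and the minimal generation of $\mathfrak{m}_p$, and regularity near the smooth point $p$ follows from Proposition~\ref{proposition:descriptionofA}. In short, your argument is sound and slightly more self-contained logically, while the paper's is shorter because it leans on the explicit form of the elements constructed in Proposition~\ref{proposition:thereexistsmufwithmufpzero}.
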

\begin{proof}
The image of the vector fields $[y\tau_i, \mu ], \ [y \sigma_j ,\mu]  \in \mathcal{J}$, $i=1,\dots,r$, $j=1,\dots,s$, span $T_p X$.
\end{proof}

\begin{lemma}\label{lemma:calculationsboring}
 Let $\mathcal{J}$ be an nonzero ideal of $\V$, then
\begin{enumerate}
    \item\label{item:001calculationsboring} If $\mu \in \mathcal{J}_{\boh}$, then $\mu(f) \mu \in \mathcal{J}$ for all $f \in A$.
    \item\label{item:002calculationsboring} If $g \in A_{\boh}$ and $\mu \in \mathcal{J}_{\boh}$, then $\mu(f)\mu(g) \mu \in \mathcal{J}$ for all $f \in A$.
    \item\label{item:003calculationsboring} If $g \in A_{\boh}$ and $\mu \in \mathcal{J}_{\boh}$, then $f \mu(\mu(g)) \mu \in \mathcal{J}$ for all $f \in A$.
\end{enumerate}
\end{lemma}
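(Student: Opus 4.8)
The plan is to derive all three statements from elementary bracket identities in $\V$, using only that $\mathcal{J}$ is an ideal and that $\bk$ has characteristic $0$; neither the coordinate system nor the preceding propositions will be needed. The computational engine is the observation that, for an even derivation $\mu$ and any $\phi \in A$, the product $\phi\mu$ again lies in $\V = \Der(A)$ (since $\Der(A)$ is a left $A$-module), together with the identities $[\mu, \phi\mu] = \mu(\phi)\mu$ and $[\psi\mu, \phi\mu] = (\psi\mu(\phi) - \phi\mu(\psi))\mu$ for even $\psi$; I would first record these by a direct application of the (super) Leibniz rule.

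Item (\ref{item:001calculationsboring}) then falls out immediately: $f\mu \in \V$, so $[\mu, f\mu] = \mu(f)\mu \in \mathcal{J}$. Applying this twice also gives $\mu(g)\mu \in \mathcal{J}$ and $\mu(\mu(g))\mu \in \mathcal{J}$, which I would use as the starting point for items (\ref{item:002calculationsboring}) and (\ref{item:003calculationsboring}).

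For items (\ref{item:002calculationsboring}) and (\ref{item:003calculationsboring}) the idea is to compute two brackets whose ``cross terms'' coincide. First, since $f\mu(g)\mu \in \V$ and $\mu \in \mathcal{J}$, expanding $[\mu, f\mu(g)\mu]$ by Leibniz gives $\mu(f)\mu(g)\mu + f\mu(\mu(g))\mu \in \mathcal{J}$. Second, since $\mu(g)\mu \in \mathcal{J}$ and $f\mu \in \V$, expanding $[\mu(g)\mu, f\mu]$ gives $\mu(g)\mu(f)\mu - f\mu(\mu(g))\mu \in \mathcal{J}$. Because $g \in A_{\boh}$, the element $\mu(g)$ is even, hence central in the supercommutative algebra $A$, so $\mu(g)\mu(f)\mu = \mu(f)\mu(g)\mu$; adding the two memberships then yields $2\,\mu(f)\mu(g)\mu \in \mathcal{J}$, and dividing by $2$ proves (\ref{item:002calculationsboring}). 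Substituting (\ref{item:002calculationsboring}) back into the first bracket isolates $f\mu(\mu(g))\mu \in \mathcal{J}$, which is (\ref{item:003calculationsboring}).

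I do not expect a genuine obstacle, since the content is purely formal, but the step requiring care is the sign bookkeeping in the Leibniz rule when $f$ or $\psi$ is odd, together with the check that $\phi\mu$ is honestly a superderivation; both reduce to supercommutativity of $A$. It is worth flagging precisely where the hypotheses enter: $g \in A_{\boh}$ makes $\mu(g)$ central, so the two brackets reinforce each other instead of cancelling, and characteristic $0$ is what permits the final division by $2$.
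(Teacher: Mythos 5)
Your proposal is correct and follows essentially the same route as the paper: item (\ref{item:001calculationsboring}) via $[\mu,f\mu]=\mu(f)\mu$, item (\ref{item:002calculationsboring}) by combining $[\mu,f\mu(g)\mu]$ with $[\mu(g)\mu,f\mu]$ (the paper writes the latter as $-[f\mu,\mu(g)\mu]$, which is the same bracket) so that the $f\mu(\mu(g))\mu$ terms cancel and one divides by $2$, and item (\ref{item:003calculationsboring}) by subtracting (\ref{item:002calculationsboring}) from the expansion of $\mu(f\mu(g))\mu$. Your explicit flagging of where $g\in A_{\boh}$ and characteristic $0$ enter is a welcome clarification of points the paper leaves implicit.
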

\begin{proof}
Let $\mu \in \mathcal{J}_{\boh}$ and $f \in A$, then  $[\mu,f\mu] = \mu(f) \mu +(-1)^{|f||\mu|} f[\mu,\mu]=\mu(f)\mu \in \mathcal{J}$ and item \eqref{item:001calculationsboring} follows. For each $g\in A_{\boh}$, $[\mu,f\mu(g)\mu] - [f\mu,\mu(g)\mu] = 2\mu(g)\mu(f) \in \mathcal{J}$, hence \eqref{item:001calculationsboring} is proved. By \eqref{item:001calculationsboring}, $\mu(f\mu(g)) \mu \in \mathcal{J}$. By \eqref{item:002calculationsboring}, $\mu(f)\mu(g) \mu \in \mathcal{J}$. Since $f\mu(\mu(g)) = \mu(f\mu(g)) - \mu(g)\mu(f)$, we have that $f \mu(\mu(g)) \mu \in \mathcal{J}$.

\end{proof}

\begin{lemma}\label{lemma:generatedidealising1}
Let $f,g \in A_{\boh}$, $\mu \in \mathcal{J}_{\boh}$. Let $I_{f,g,\mu}$ be the principal ideal of $A$ generated by $\mu(f)\mu(\mu(g))$. Then for every $a \in I_{f,g,\mu}$ and every $\tau \in \V$, $a \tau \in \mathcal{J}$.
\end{lemma}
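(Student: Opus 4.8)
The plan is to reduce at once to a ``one vector field'' statement and then produce a single Leibniz identity that does all the work.

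\smallskip
\emph{Reduction.} If $\mu(f)\mu(\mu(g))=0$ there is nothing to prove, so assume $q:=\mu(f)\mu(\mu(g))\neq 0$ and put $m:=\mu(\mu(g))\in A_{\boh}$. An arbitrary element of $I_{f,g,\mu}$ is $cq$ with $c\in A$, and for homogeneous $\tau\in\V$ one has $(cq)\tau=q\,(c\tau)$ inside $\V$, with $c\tau\in\V$. Since $\mathcal{J}$ is $\Z_2$-graded, it therefore suffices to prove
$q\,\sigma\in\mathcal{J}$ for every homogeneous $\sigma\in\V$.

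\smallskip
\emph{An absorbing element.} I would first record the $A$-linear consequence of Lemma~\ref{lemma:calculationsboring}\eqref{item:003calculationsboring}: letting the ``$f$'' there range over all of $A$ gives $A\,m\mu\subseteq\mathcal{J}$ (in particular $m\mu\in\mathcal{J}$). From this, for every $\sigma\in\V$ and every $a\in A$, expanding $[\sigma,a\,m\mu]=\sigma(a)\,m\mu+(-1)^{|\sigma||a|}a\,[\sigma,m\mu]$ shows $a\,[\sigma,m\mu]\in\mathcal{J}$, because the left-hand side lies in $\mathcal{J}$ and $\sigma(a)\,m\mu\in A\,m\mu\subseteq\mathcal{J}$. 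Hence $A\,[\sigma,m\mu]\subseteq\mathcal{J}$ for every $\sigma$.

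\smallskip
\emph{The key identity.} The heart of the proof is the identity, valid for every homogeneous $\sigma\in\V$,
\[
[\,m\mu,\ f\sigma\,]\ +\ f\,[\,\sigma,\ m\mu\,]\ =\ m\,\mu(f)\,\sigma\ =\ q\,\sigma .
\]
One checks it by expanding both brackets directly from the Leibniz rule for derivations: using that $f$ and $m$ are even one gets $[m\mu,f\sigma]=q\sigma+fm\,[\mu,\sigma]-f\,\sigma(m)\,\mu$ and $[\sigma,m\mu]=\sigma(m)\,\mu-m\,[\mu,\sigma]$, and the two ``error'' terms $fm\,[\mu,\sigma]$ and $f\,\sigma(m)\,\mu$ cancel. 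Now $[m\mu,f\sigma]\in\mathcal{J}$ since $m\mu\in\mathcal{J}$, $f\sigma\in\V$ and $\mathcal{J}$ is an ideal, while $f\,[\sigma,m\mu]\in A\,[\sigma,m\mu]\subseteq\mathcal{J}$ by the previous step; therefore $q\sigma\in\mathcal{J}$, which completes the proof.

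\smallskip
\emph{Where the difficulty lies.} The non-obvious part is finding this identity. The naive manoeuvres — bracketing $\mu$, or $\mu(f)\mu$, against vector fields $h\sigma$, $\mu(h)\sigma$, etc., and using Lemma~\ref{lemma:calculationsboring} to clean up — are all self-referential: every chain of such relations only yields $q\sigma\equiv q\sigma\pmod{\mathcal{J}}$ and never actually lands $q\sigma$ in $\mathcal{J}$. What breaks the circularity is to bracket with the element $m\mu=\mu(\mu(g))\mu$, whose special feature (provided by Lemma~\ref{lemma:calculationsboring}\eqref{item:003calculationsboring}) is that it is \emph{absorbing} for multiplication by $A$; bracketing $m\mu$ against $f\sigma$ produces the $\sigma$-component with coefficient exactly $m\,\mu(f)=q$, and the absorption property is precisely what is needed to kill the leftover $\mu$- and $[\mu,\sigma]$-components via the term $f\,[\sigma,m\mu]$.
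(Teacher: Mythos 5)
Your proof is correct and follows essentially the same route as the paper: both hinge on Lemma~\ref{lemma:calculationsboring}\eqref{item:003calculationsboring} giving $A\,\mu(\mu(g))\mu \subseteq \mathcal{J}$ and on bracketing such an element against $f\tau$ to extract the $\tau$-component, and your identity $[m\mu,f\sigma]+f[\sigma,m\mu]=q\sigma$ is the paper's three-term combination in disguise, since $f[\sigma,m\mu]=[\sigma,fm\mu]-\sigma(f)m\mu$. Your preliminary reduction $(cq)\tau=q(c\tau)$, which lets you work with the generator itself instead of a general multiple $b\mu(f)\mu(\mu(g))$, is a mild streamlining of the paper's computation rather than a different idea.
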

\begin{proof}
If $a = b\mu(f)\mu(\mu(g))$, then $b\mu(\mu(g)) \mu, \ fb \mu(\mu(g)) \mu, \ \mu(f)b\mu(\mu(g)) \mu  \in \mathcal{J}$ by Lemma \ref{lemma:calculationsboring}. Therefore, if $h= b \mu(\mu(g))$
\begin{align*}
    &[b\mu(\mu(g))\mu, f \tau] - [f b \mu(\mu(g)) \mu ,\tau ]-(-1)^{|\tau| |b|} \tau(f)b\mu(\mu(g)) \mu \\
    = & h\mu(f) \tau -(-1)^{|b| |\tau|} f \tau(h) \mu + h f[\mu,\tau] +(-1)^{|\tau| |b|} \tau(f h) \mu -fh[\mu,\tau] - (-1)^{|\tau| |b|}  \tau(f) h \mu \\
    = & h \mu(f) \tau - (-1)^{|\tau| |b|} f \tau(h) \mu +(-1)^{|\tau| |b| } \tau(f) h \mu + (-1)^{|\tau| |b| }  f \tau(h) \mu - (-1)^{|\tau| |b|}  \tau(f) h \mu \\ 
    = & h \mu(f) \tau = b \mu(f)\mu(\mu(g)) \tau = a \tau \in \mathcal{J}
\end{align*}
\end{proof}

\begin{theorem}
If $X$ is a smooth integral affine supervariety and $\dim X = r|s \geq 1|0$, then $\V=\Der ( \Gamma(X,\cO) )$ is a simple Lie superalgebra.
\end{theorem}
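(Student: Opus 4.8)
Write $A=\Gamma(X,\cO)$, a finitely generated integral superdomain. The plan is to show that every nonzero ideal $\mathcal{J}$ of $\V=\Der(A)$ is all of $\V$. I would first reduce to $\Z_2$-graded ideals. Using $A\cong\Lambda^{\bullet}_{\overline{A}}N$ with $N$ a finitely generated projective $\overline{A}$-module of rank $s$, the algebra $A=\bigoplus_{k=0}^{s}\Lambda^{k}N$ carries a bounded $\Z_{\geq 0}$-grading with $A_{\boh}=\bigoplus_{k\text{ even}}\Lambda^{k}N$ and $A_{\bon}=\bigoplus_{k\text{ odd}}\Lambda^{k}N$. The Euler derivation $E$, acting on $\Lambda^{k}N$ as multiplication by $k$, lies in $\V$, and every $D\in\V$ is a finite sum of homogeneous components $D=\sum_{m}D_{(m)}$ for this grading with $[E,D_{(m)}]=mD_{(m)}$; hence $\ad(E)$ is diagonalizable on $\V$ with integer eigenvalues and eigenspaces $\V_{(m)}$, where $\V_{(m)}\subseteq\V_{\boh}$ for $m$ even and $\V_{(m)}\subseteq\V_{\bon}$ for $m$ odd. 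As $E\in\V$, any ideal $\mathcal{J}$ is $\ad(E)$-stable, so $\mathcal{J}=\bigoplus_{m}(\mathcal{J}\cap\V_{(m)})$; in particular $\mathcal{J}$ is $\Z_2$-graded. From now on $\mathcal{J}$ is a nonzero $\Z_2$-graded ideal, the setting of Proposition~\ref{proposition:thereexistsmufwithmufpzero} and Lemmas~\ref{lemma:calculationsboring} and~\ref{lemma:generatedidealising1}.

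Next, since $X$ is smooth, every closed point $p$ is smooth, so Proposition~\ref{proposition:thereexistsmufwithmufpzero}(3) yields $\mu_p\in\mathcal{J}_{\boh}$ and $g_p\in A_{\boh}$ with $\mu_p(\mu_p(g_p))(p)\neq 0$. Putting $f_p:=\mu_p(g_p)\in A_{\boh}$, so that $\mu_p(f_p)(p)=\mu_p(\mu_p(g_p))(p)\neq 0$, I set
\[
a_p:=\mu_p(f_p)\,\mu_p(\mu_p(g_p))=\bigl(\mu_p(\mu_p(g_p))\bigr)^{2}\in A_{\boh}.
\]
Then $a_p(p)\neq 0$, and $a_p$ is exactly the generator of the principal ideal $I_{f_p,g_p,\mu_p}$ of Lemma~\ref{lemma:generatedidealising1}, so $I_{f_p,g_p,\mu_p}\cdot\V\subseteq\mathcal{J}$.

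I would then globalize. Let $\mathfrak{b}=\sum_{p}I_{f_p,g_p,\mu_p}$, summed over all closed points, an ideal of $A$ containing every $a_p$. Since $a_p(p)\neq 0$, the ideal $\mathfrak{b}$ is contained in no maximal ideal of $\overline{A}$; because $\overline{A}$ is a finitely generated integral domain over the algebraically closed field $\bk$, the Nullstellensatz gives $\overline{A/\mathfrak{b}}=0$, so the finitely generated supercommutative algebra $A/\mathfrak{b}$ equals its own nilpotent ideal $\bigl((A/\mathfrak{b})_{\bon}\bigr)$ and is therefore zero; that is, $\mathfrak{b}=A$. By Lemma~\ref{lemma:generatedidealising1}, $\mathfrak{b}\cdot\V\subseteq\mathcal{J}$, and since $1\in\mathfrak{b}$ this forces $\V\subseteq\mathcal{J}$, i.e. $\mathcal{J}=\V$. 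Finally $\V$ is infinite dimensional over $\bk$ (as $r\geq 1$), hence not abelian — an abelian Lie superalgebra of dimension greater than one has a proper nonzero $\Z_2$-graded ideal — so $[\V,\V]\neq 0$ and $\V$ is simple.

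The step I expect to be the main obstacle is the passage from the pointwise statement of Proposition~\ref{proposition:thereexistsmufwithmufpzero}(3) to the global identity $\mathfrak{b}=A$: one has to make the choices so that each $a_p$ genuinely lies in a principal ideal to which Lemma~\ref{lemma:generatedidealising1} applies, and then combine the Nullstellensatz for $\overline{A}$ with the nilpotence of the ideal generated by the odd part of $A$ to conclude $\mathfrak{b}=A$. The initial reduction to graded ideals is routine but worth recording, since ``simple'' should mean the absence of nontrivial ideals rather than merely of $\Z_2$-graded ones.
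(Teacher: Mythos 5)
Your argument is correct and follows the paper's proof in all essentials: Proposition~\ref{proposition:thereexistsmufwithmufpzero}(3) together with Lemma~\ref{lemma:generatedidealising1} shows that $J=\{a\in A \mid a\V\subseteq\mathcal{J}\}$ contains, for each closed point $p$, an element not vanishing at $p$, whence $J=A$ and $\mathcal{J}=\V$; your choice $f_p=\mu_p(g_p)$ and your detour through $\overline{A}$, the Nullstellensatz and the nilpotence of $\bigl((A/\mathfrak{b})_{\bon}\bigr)$ are only cosmetic variants of the paper's direct appeal to maximal ideals of $A$. The one genuine addition is your preliminary reduction from arbitrary ideals to $\Z_2$-graded ones via the Euler derivation of $\Lambda^{\bullet}_{\overline{A}}N$, which the paper omits (it works with graded ideals from the outset, the usual convention for simplicity of superalgebras); that reduction is sound and mildly strengthens the conclusion.
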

\begin{proof}
    Let $\mathcal{J}$ be a $\Z_2$-graded ideal of $\V$ and $J = \{ a \in A \mid a \V \subset \mathcal{J} \}$. For each closed point $p \in X$ there exists $\mu \in \mathcal{J}_{\boh}$ and $f,g \in A_{\boh}$ such that $\mu(f) (p) \neq 0$ and $\mu(\mu(g)) (p) \neq 0$ by Proposition \ref{proposition:thereexistsmufwithmufpzero}. Thus, $\mu(f)\mu(\mu(g)) \neq 0$ and the principal ideal $I_{f,g,\mu}$ of $A$ generated by $\mu(f)\mu(\mu(g))$ is nonzero. By Lemma \ref{lemma:generatedidealising1}, $I_{f,g,\mu} \subset J$. Thus, $J$ is a nonzero $\V$-ideal of $A$. Furthermore, for each maximal ideal $\fm$ of $A$ there exists a $h \in J$ such that $h(p) \neq 0$ where $p \in X$ is the corresponding closed point. Hence, $h + \fm  \neq 0$, and $J$ is not contained in $\fm$. Since every proper ideal of $A$ is contained in a maximal ideal, we conclude that $J=A$. Thus, $ \V = J \V =  A\V  \subset \mathcal{J} \subset \V$. 
\end{proof}

\section*{Acknowledgments}	
The author thanks Lucas Calixto and Vyacheslav Futorny for numerous helpful discussions.
H.\,R.\,was financed by São Paulo Research Foundation (FAPESP) (grant 2020/13811-0, 2022/00184-3).

\end{document}